\documentclass[11pt]{article}

\usepackage{amsfonts,amsmath,latexsym,color,epsfig,hyperref,enumitem, amssymb,bbm}

\setlength{\textheight}{22.5cm} \setlength{\textwidth}{6.7in}
\setlength {\topmargin}{0pt} \setlength{\evensidemargin}{1pt}
\setlength{\oddsidemargin}{1pt} \setlength{\headsep}{10pt}
\setlength{\parskip}{1mm} \setlength{\parindent}{0mm}

\newtheorem{theorem}{Theorem}[section]
\newtheorem{lemma}[theorem]{Lemma}

\newtheorem{cor}[theorem]{Corollary}
\newtheorem{prop}[theorem]{Proposition}

\renewcommand{\le}{\leqslant}
\renewcommand{\ge}{\geqslant}

\newcommand{\R}{\mathbb R}

\newenvironment{proof}
      {\medskip\noindent{\bf Proof:}\hspace{1mm}}
      {\hfill$\Box$\medskip}
\def\qed{\ifvmode\mbox{ }\else\unskip\fi\hskip 1em plus 10fill$\Box$}

\input{epsf}

\makeatletter
\def\Ddots{\mathinner{\mkern1mu\raise\p@
\vbox{\kern7\p@\hbox{.}}\mkern2mu
\raise4\p@\hbox{.}\mkern2mu\raise7\p@\hbox{.}\mkern1mu}}
\makeatother

\def\R{\mathbb R}

\title{\vspace{-1.3cm} Upper bounds for Heilbronn's triangle problem in higher dimensions}
\author{Dmitrii Zakharov\thanks{Department of Mathematics, Massachusetts Institute of Technology, 77 Massachusetts Avenue, Cambridge, MA 02139, USA, Email:~\href{mailto:zakhdm@mit.edu}{\tt zakhdm@mit.edu}.}}

\date{}

\begin{document}

\maketitle

\vspace{-0.5cm}

\abstract{
We develop a new simple approach to prove upper bounds for generalizations of the Heilbronn's triangle problem in higher dimensions. Among other things, we show the following: for fixed $d \ge 1$, any subset of $[0, 1]^d$ of size $n$ contains
\begin{itemize}
    \item $d+1$ points which span a simplex of volume at most $C_d n^{-\log d+ 6}$,
    \item $1.1 d$ points whose convex hull has volume at most $C_d n^{-1.1}$,
    \item $k\ge 4\sqrt{d}$ points which span a $(k-1)$-dimensional simplex of volume at most $C_d n^{-\frac{k-1}{d} - \frac{k^2}{8d^2}}$.
\end{itemize}

}
\vskip 0.5cm
{\em MSC Codes:} 52C10, 52C35, 05D99

\section{Introduction}

The Heilbronn's triangle problem asks for the smallest number $\Delta = \Delta(n)$ such that among any $n$ points in the unit square $[0, 1]^2$ one can find 3 which form a triangle of area at most $\Delta$. After a series of developments by Roth \cite{R1}, \cite{R2}, \cite{R3} and Schmidt \cite{S}, in 1981-82 Komlos--Pintz--Szemer\'edi \cite{KPS1}, \cite{KPS2} obtained the following bounds:
\begin{equation}\label{plane}
n^{-2} \log n \lesssim \Delta(n) \lesssim n^{-\frac{8}{7}+\varepsilon}
\end{equation}
where $\varepsilon > 0$ can be taken arbitrarily small. Very recently, Cohen, Pohoata and the author \cite{CPZ} improved the upper bound in (\ref{plane}) to $\Delta(n) \ll n^{-\frac{8}{7}-\frac{1}{2000}}$.

In this note, we consider a higher dimensional analogue of this question, namely, given $1 \le k \le d+1$ define $\Delta_{k, d}(n)$ to be the minimum $\Delta > 0$ such that among any $n$ points in $[0, 1]^d$ there are $k$ points which form a simplex with $(k-1)$-dimensional volume at most $\Delta$. 
Setting $d=2, ~k=3$ recovers the original Heilbronn's triangle problem. 

Simple probabilistic and packing arguments imply that 
\begin{equation}\label{ez}
n^{- \frac{k-1}{d-k+2} } \lesssim \Delta_{k, d}(n) \lesssim n^{- \frac{k-1}{d}},
\end{equation}
for any fixed $1 \le k \le d+1$ (see also \cite{Bar}, \cite{BarN}). 
The lower bound can be improved by a logarithmic factor using results on independence numbers of sparse hypergraphs, see \cite{L} for details. Lefmann \cite{L} improved the upper bound by a polynomial factor for all {\em even} $k \ge 2$:
\begin{equation}\label{lef}
\Delta_{k, d}(n) \lesssim n^{- \frac{k-1}{d} - \frac{k-2}{2d(d-1)}},
\end{equation}
earlier, Brass \cite{B} proved this in the special case $k=d+1$. For $d\ge 3$ and odd $k$, no improvements over (\ref{ez}) were known and the first interesting special case is $d=3$, $k=3$.

In this note we establish the following inequality between functions $\Delta_{k, d}$:
\begin{theorem}\label{main}
Fix $2 \le k \le d+1$ and $1 \le \ell < k$. Then for some constant $C = C(d)$ and all $n' > C n$ we have:
\begin{equation}\label{eqmain}
    \Delta_{k, d}(n') \le C \Delta_{\ell, d}(n) \Delta_{k-\ell ,d-\ell}(n).
\end{equation}
\end{theorem}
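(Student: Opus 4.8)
The plan is to assemble the $k$-simplex from a small ``base'' $l$-simplex plus $k-l$ further points whose \emph{directions relative to the base} are governed by the $(d-l-1)$-dimensional problem. Write $n'=Cn$ with $C=C(d)$ to be fixed, and split the $n'$ points into sets $A,B$ with $|A|=n$. Applying the definition of $\Delta_{l,d}(n)$ to $A$ gives $v_0,\dots,v_l\in A$ with $\mathrm{Vol}_l(\mathrm{conv}(v_0,\dots,v_l))\le\Delta_{l,d}(n)$; we may assume these $l+1$ points are affinely independent, since otherwise they together with any $k-l$ points of $B$ span a degenerate $k$-simplex and we are done. Let $W=\mathrm{span}(v_1-v_0,\dots,v_l-v_0)$, let $P\colon\R^d\to W^{\perp}$ be the orthogonal projection, so $\dim W^\perp=d-l$ and $P$ collapses the base flat $F:=\mathrm{aff}(v_0,\dots,v_l)$ to a single point $q:=Pv_0$.

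For arbitrary $w_{l+1},\dots,w_k$, the Gram-determinant formula for simplex volume, applied after the volume-preserving shear that replaces each $w_j-v_0$ by its $W^\perp$-component $z_j:=P(w_j-v_0)$ (which block-diagonalises the Gram matrix), gives
\[
\mathrm{Vol}_k\big(\mathrm{conv}(v_0,\dots,v_l,w_{l+1},\dots,w_k)\big)=\binom{k}{l}^{-1}\mathrm{Vol}_l\big(\mathrm{conv}(v_0,\dots,v_l)\big)\cdot\mathrm{Vol}_{k-l}\big(\mathrm{conv}(q,Pw_{l+1},\dots,Pw_k)\big).
\]
Writing $z_j=r_j\theta_j$ with $r_j=|z_j|=\mathrm{dist}(w_j,F)\le\sqrt d$ and $\theta_j\in S^{d-l-1}\subset W^\perp$, scaling the $j$-th edge from $q$ by $r_j$ scales the last volume by $r_j$, and the cone with apex $q$ over $\mathrm{conv}(\theta_{l+1},\dots,\theta_k)$ has height at most $1$, so
\[
\mathrm{Vol}_{k-l}\big(\mathrm{conv}(q,Pw_{l+1},\dots,Pw_k)\big)=\Big(\textstyle\prod_j r_j\Big)\,\mathrm{Vol}_{k-l}\big(\mathrm{conv}(0,\theta_{l+1},\dots,\theta_k)\big)\le \tfrac{d^{(k-l)/2}}{k-l}\,\mathrm{Vol}_{k-l-1}\big(\mathrm{conv}(\theta_{l+1},\dots,\theta_k)\big).
\]
Hence $\mathrm{Vol}_k(\cdots)\le C_d\,\Delta_{l,d}(n)\cdot\mathrm{Vol}_{k-l-1}(\mathrm{conv}(\theta_{l+1},\dots,\theta_k))$, and it remains to choose $w_{l+1},\dots,w_k\in B$ so that the unit vectors $\theta_j=P(w_j-v_0)/|P(w_j-v_0)|$ span a chordal $(k-l-1)$-simplex on $S^{d-l-1}$ of volume $O_d\big(\Delta_{k-l-1,d-l-1}(n)\big)$.

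This last point is a spherical analogue of Heilbronn's bound: among any $N$ points on $S^m$ some $p+1$ of them span a chordal $p$-simplex of $p$-volume at most $C_m\,\Delta_{p,m}(\lfloor N/(2m+2)\rfloor)$. One covers $S^m$ by the $2m+2$ caps $\{\pm\theta_i>(m+1)^{-1/2}\}$, each a graph over a ball in a coordinate hyperplane with bounded derivative $L_m$; the most populous cap has $\ge N/(2m+2)$ points, and mapping them affinely into $[0,1]^m$ and invoking $\Delta_{p,m}$ produces $p+1$ of them spanning a small $p$-simplex downstairs. To transfer the bound upstairs one uses that a flat $p$-simplex minimises $p$-volume among surfaces with its boundary (project onto its affine hull), so the chordal simplex has $p$-volume at most that of the graph-image of the downstairs simplex, hence at most $(1+L_m^2)^{p/2}$ times the downstairs volume. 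Applying this with $m=d-l-1$, $p=k-l-1$ to the directions of the points of $B\setminus F$ finishes the argument: if $\ge k-l$ points of $B$ lie on $F$, or if two of them give the same $\theta$, the resulting $k$-simplex is degenerate and we are done; otherwise $N\ge|B|-k$, and choosing $C$ so large that $\lfloor(|B|-k)/(2d)\rfloor\ge n$ (using that $\Delta_{k-l-1,d-l-1}$ is non-increasing) bounds the second factor by $C_d\,\Delta_{k-l-1,d-l-1}(n)$, yielding \eqref{eqmain}. I expect the only genuine work to be this transfer lemma --- verifying that smallness of a simplex survives passage through the spherical charts, for which the drop from dimension $d-l$ to $d-l-1$ in the second factor is exactly what the normalisation $z_j=r_j\theta_j$ buys --- together with the routine bookkeeping that keeps every $\Delta$ evaluated at $n$ by absorbing the constant-factor losses into $C=C(d)$.
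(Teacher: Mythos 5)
Your overall skeleton is the same as the paper's: find a small $l$-dimensional base, project the remaining points orthogonally off its span, factor the volume via the block-diagonalised Gram matrix, normalise the projections to unit vectors at the cost of only increasing volumes, and recurse on the sphere of directions. The volume factorisation and the bound $\mathrm{Vol}_{k-l}(\mathrm{conv}(0,\theta_{l+1},\dots,\theta_k))\le\frac{1}{k-l}\mathrm{Vol}_{k-l-1}(\mathrm{conv}(\theta_{l+1},\dots,\theta_k))$ are all fine. The gap is in your ``spherical analogue of Heilbronn's bound'' and specifically in the transfer inequality
$\mathrm{Vol}_p(\text{chordal simplex on } S^m)\le(1+L_m^2)^{p/2}\,\mathrm{Vol}_p(\text{simplex downstairs})$.
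This is false. Take $m=p=2$ and the points $\xi_0=(0,0)$, $\xi_1=(\varepsilon,0)$, $\xi_2=(2\varepsilon,\delta)$ in the chart of the north-pole cap of $S^2$: the downstairs area is $\varepsilon\delta/2$, while the three lifted points lie essentially on a great circle and span a triangle of area of order $\varepsilon^3$, so the ratio blows up as $\delta\to0$. This is not a corner case you can wave away: the $p+1$ points produced by $\Delta_{p,m}$ downstairs are by design extremely flat (volume $N^{-\Theta(1)}$) but may have diameter of order $1$, and the curvature of the chart contributes a chordal volume of order $(\mathrm{diam})^{p+1}$, which swamps the downstairs volume entirely. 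The minimisation argument does not rescue this either, because the chordal simplex and the graph-image of the downstairs simplex do not have the same boundary, so ``flat minimises area for a given boundary'' does not compare them; and the inequality it would give (orthogonal projection onto the affine hull of the chordal simplex is a contraction) runs in the wrong direction.

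The fix is to notice you never needed the chordal volume: your own computation produces $\mathrm{Vol}_{k-l}(\mathrm{conv}(0,\theta_{l+1},\dots,\theta_k))$, i.e.\ (up to $(k-l)!$) the determinant of the unit vectors $\theta_j$, and converting that to the chordal $(k-l-1)$-volume via the height bound $h\le1$ strengthens what you must then prove. Work with the determinant quantity directly, and transfer it to $[0,1]^{d-l-1}$ by \emph{central} projection from the origin onto an affine hyperplane at distance $1$ (after pigeonholing a positive fraction of the $\theta_j$ into a cap that projects into a bounded cube). Under central projection $\theta=(x,1)/|(x,1)|$ the determinant of the $\theta$'s equals $\prod_j|(x_j,1)|^{-1}$ times the determinant of the $(x_j,1)$'s, which is exactly $(k-l-1)!\prod_j|(x_j,1)|^{-1}$ times the flat simplex volume of the $x_j$'s; this is an exact multilinear identity, so degenerate configurations downstairs stay degenerate upstairs and the constant losses are genuinely bounded. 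This is precisely the content of the paper's reduction (its Proposition relating $\Delta'_{k,d}$ to $\Delta_{k,d}$), and with it the rest of your argument goes through.
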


Combining (\ref{eqmain}) with known upper bounds on $\Delta_{k, d}$ leads to new inequalities. 
The most notable improvement appears in the case when $k=d+1$. Recall that known estimates give $n^{-d} \lessapprox \Delta_{d+1, d}(n) \lesssim n^{-1}$ for all $d$ and $\Delta_{d+1, d}(n) \lesssim n^{-1-\frac{1}{2d}}$ when $d$ is odd. We show:

\begin{cor}\label{cor2}
We have the following bounds:
\begin{align*}
    \Delta_{4,3}(n) \lesssim n^{-\frac43},~~ \Delta_{5,4}(n) \lesssim n^{-\frac32}, ~~ \Delta_{6, 5}(n) \lesssim n^{-\frac{33}{20}},\\
    \Delta_{7, 6}(n) \lesssim n^{-1.676}, ~~ \Delta_{8, 7}(n) \lesssim n^{-1.792}, ~~ \Delta_{9, 8}(n) \lesssim n^{-\frac{19}{10}}.
\end{align*}
\end{cor}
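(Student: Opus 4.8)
The plan is to derive each of the six estimates by one application of Theorem~\ref{main} in the diagonal case $k=d$, with a suitably chosen split point $l$, substituting into the right-hand side of~\eqref{eqmain} the best known bound for each of the two factors $\Delta_{l,d}$ and $\Delta_{d-l-1,d-l-1}$. The inputs I would feed in are: the trivial bound $\Delta_{0,d}(n)=1$; the packing bound $\Delta_{m,d}(n)\ll n^{-m/d}$ from~\eqref{ez}, in particular $\Delta_{1,1}(n)\ll n^{-1}$ (pigeonhole on $[0,1]$); the Koml\'os--Pintz--Szemer\'edi planar bound $\Delta_{2,2}(n)\ll n^{-8/7+\varepsilon}$ from~\eqref{plane}; Lefmann's bound~\eqref{lef} for odd exponents, which yields $\Delta_{3,5}(n)\ll n^{-13/20}$ and $\Delta_{3,6}(n)\ll n^{-8/15}$; and, in a bootstrap, the diagonal bounds $\Delta_{j,j}$ already obtained for smaller $j$. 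First I would record that $\Delta_{k,d}(\cdot)$ is non-increasing (from any $n'$ points one may restrict to $n\le n'$ of them), so the hypothesis $n'>Cn$ in Theorem~\ref{main} is harmless for $\ll$-statements: given $n'$ one applies the theorem with $n=\lceil n'/(C+1)\rceil$ and absorbs all constants into the $\ll$.

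With these ingredients the bounds come out as
\begin{align*}
\Delta_{3,3}(n) &\ll \Delta_{1,3}(n)\,\Delta_{1,1}(n) \ll n^{-1/3}\cdot n^{-1}=n^{-4/3} \quad(l=1),\\
\Delta_{4,4}(n) &\ll \Delta_{2,4}(n)\,\Delta_{1,1}(n) \ll n^{-1/2}\cdot n^{-1}=n^{-3/2} \quad(l=2),\\
\Delta_{5,5}(n) &\ll \Delta_{3,5}(n)\,\Delta_{1,1}(n) \ll n^{-13/20}\cdot n^{-1}=n^{-33/20} \quad(l=3),\\
\Delta_{6,6}(n) &\ll \Delta_{3,6}(n)\,\Delta_{2,2}(n) \ll n^{-8/15}\cdot n^{-8/7+\varepsilon}=n^{-176/105+\varepsilon} \quad(l=3),\\
\Delta_{7,7}(n) &\ll \Delta_{1,7}(n)\,\Delta_{5,5}(n) \ll n^{-1/7}\cdot n^{-33/20}=n^{-251/140} \quad(l=1),\\
\Delta_{8,8}(n) &\ll \Delta_{2,8}(n)\,\Delta_{5,5}(n) \ll n^{-1/4}\cdot n^{-33/20}=n^{-19/10} \quad(l=2),
\end{align*}
where the $\Delta_{5,5}$ line must be in place before the last two. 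Since $176/105=1.6762\ldots$ and $251/140=1.7928\ldots$, and the $\varepsilon$ in the $\Delta_{6,6}$ line may be taken arbitrarily small, these yield the stated $\Delta_{6,6}(n)\ll n^{-1.676}$ and $\Delta_{7,7}(n)\ll n^{-1.792}$.

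The one point requiring attention is the choice of $l$: for each target I would run through all $0\le l<d$ and check that the listed value is optimal. Most choices are clearly suboptimal, but some are worth noting: for $\Delta_{6,6}$ the choices $l\in\{1,2,4\}$ give only $n^{-5/3}$, so it is genuinely the planar input $\Delta_{2,2}$ that lifts the exponent above $5/3$; and for $\Delta_{8,8}$ the choice $l=3$ comes close, $\Delta_{3,8}(n)\,\Delta_{4,4}(n)\ll n^{-53/28}$ with $53/28\approx1.893$, but is beaten by $l=2$. I do not anticipate a real obstacle here: Theorem~\ref{main} does all the work, and the corollary amounts to optimizing $l$ against the table of known upper bounds, together with the routine monotonicity and constant-chasing observations above.
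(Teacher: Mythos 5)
Your proposal is correct and follows essentially the same route as the paper: the paper likewise iterates Theorem~\ref{main} with exactly the splits $l=1,2,3,3,1,2$ respectively, feeding in the packing bound~\eqref{ez}, Lefmann's bound~\eqref{lef} for $\Delta_{3,5}$ and $\Delta_{3,6}$, the Koml\'os--Pintz--Szemer\'edi bound for $\Delta_{2,2}$, and the previously derived $\Delta_{5,5}$ for the $d=7,8$ cases. Your added remarks on monotonicity in $n$ and on the optimality of the chosen $l$ (e.g.\ that $l\in\{1,2,4\}$ only gives $n^{-5/3}$ for $\Delta_{6,6}$) match the paper's own observations.
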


Note that these bounds significantly improve the previously known estimates on $\Delta_{d+1, d}$. Corollary \ref{cor2} follows from a sequence of repeated applications of Theorem \ref{main} in some optimized way. One can continue this process indefinitely to write down the best possible upper bound on $\Delta_{d+1, d}(n)$ for each fixed value of $d$. The exponents quickly become rather messy so we will not write them down for $d\ge 9$. Instead, we have the following general result:

\begin{cor}\label{cor1}
For any fixed $d \ge 3$ we have $\Delta_{d+1, d}(n) \ll n^{- \log d + 6}$. 
\end{cor}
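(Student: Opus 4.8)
The plan is to iterate Theorem~\ref{main} with $k=d$: each application trades a chunk of dimension for a power of $n$, reducing $\Delta_{d,d}$ to $\Delta_{m,m}$ for a smaller $m$, and the resulting recursion is then solved. Write $g(d)$ for the best exponent currently available, i.e. $\Delta_{d,d}(n)\ll n^{-g(d)}$; the goal is $g(d)\ge\log d-6$, where $\log$ is the natural logarithm. Taking $k=d$ and $l=d-m-1$ in Theorem~\ref{main} for an integer $1\le m\le d-2$, and bounding the first factor by the elementary estimate $\Delta_{l,d}(n)\ll n^{-l/d}$ from~(\ref{ez}), one obtains (after replacing $n$ by $\Theta(n)$ to meet the constraint $n'>Cn$, absorbing the constant, and using that $\Delta_{d,d}$ is non-increasing in $n$)
\[
g(d)\ \ge\ \frac{d-m-1}{d}+g(m)\ =\ 1-\frac{m+1}{d}+g(m),\qquad 1\le m\le d-2 .
\]
For the base one may take $g(1),g(2)\ge 1$ from $\Delta_{1,1}(n)\ll n^{-1}$ and $\Delta_{2,2}(n)\ll n^{-1}$, and one may assume $d$ large since $\log d-6<0$ (so the statement is trivial) whenever $d\le 16$.

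Next, fix the reduction schedule. Put $d_0=d$ and, while $d_i\ge 17$, let $d_{i+1}=d_i-\lceil\sqrt{d_i}\rceil$, stopping at the first $T$ with $d_T\le 16$. Since $a_{i+1}^2=d_i-\lceil\sqrt{d_i}\rceil\le d_i-\sqrt{d_i}\le(a_i-\tfrac12)^2$ for $a_i:=\sqrt{d_i}$, the numbers $a_0>a_1>\dots$ decrease by steps of at least $\tfrac12$ and stay $\ge\sqrt{17}$, so $T\le 2\sqrt d$ and $\sum_{i<T}1/d_i=\sum_{i<T}1/a_i^2<4\sum_{m\ge 8}m^{-2}<4/7$. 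Unrolling the recursion along this schedule,
\[
g(d)\ \ge\ \sum_{i=0}^{T-1}\Bigl(1-\frac{d_{i+1}+1}{d_i}\Bigr)+g(d_T)
\ =\ \sum_{i=0}^{T-1}\Bigl(1-\frac{d_{i+1}}{d_i}\Bigr)-\sum_{i=0}^{T-1}\frac{1}{d_i}+g(d_T).
\]
Using the elementary inequality $1-x\ge\ln(1/x)-(1-x)^2$ on $[\tfrac12,1]$ with $x=d_{i+1}/d_i$, together with $(1-d_{i+1}/d_i)^2=(\lceil\sqrt{d_i}\rceil/d_i)^2\le 4/d_i$, the first sum telescopes to at least $\ln(d_0/d_T)-4\sum_{i<T}1/d_i$. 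Hence
\[
g(d)\ \ge\ \ln d-\Bigl(\ln d_T+5\sum_{i<T}\frac{1}{d_i}-g(d_T)\Bigr)\ \ge\ \ln d-\Bigl(\ln 16+\tfrac{20}{7}-1\Bigr)\ >\ \log d-6 ,
\]
which is Corollary~\ref{cor1}.

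The step I expect to take the most care is the accounting for the constraint $n'>Cn$ and the accumulated constants. Each of the $T=O(\sqrt d)$ applications of Theorem~\ref{main} uses a constant $C(d_i)$, so along the chain the admissible number of points shrinks by a factor $(C^\ast(d))^{O(\sqrt d)}$, where $C^\ast(d)=\max_{j\le d}C(j)$; consequently the bound $\Delta_{d,d}(n)\ll n^{-\log d+6}$ is first derived only for $n$ above a $d$-dependent threshold, and for smaller $n$ it is made to hold by enlarging the implied constant (allowed to depend on the fixed $d$). For the same reason each factor $\Delta_{l,d}(n)\ll n^{-l/d}$ is really applied with $n$ replaced by $n/(C^\ast)^{O(\sqrt d)}$, which costs only a further $d$-dependent multiplicative constant, and the product of all the $C(d_i)$ is a single constant depending on $d$. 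One should also verify the two elementary facts used above: $\sum_{i<T}1/d_i=O(1)$ along the $\lceil\sqrt{d_i}\rceil$-schedule and $x-1-\ln x\le(1-x)^2$ on $[\tfrac12,1]$. (For the sharper Corollary~\ref{cor2} one instead keeps the iteration very short, optimizes the choice of $l$ at each step, and invokes Lefmann's bound~(\ref{lef}) when $l$ is odd; that is a separate finite computation.)
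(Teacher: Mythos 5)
Your proposal is correct and follows essentially the same route as the paper: iterate the recursion $\delta_{d,d}\ge \delta_{l,d}+\delta_{d-l-1,d-l-1}$ coming from Theorem~\ref{main} with step size $l+1\approx\sqrt{d}$, bound the first factor by the trivial estimate $\delta_{l,d}\ge l/d$ from~(\ref{ez}), and observe that the accumulated error $\sum 1/d_i$ converges along this schedule. The only (cosmetic) difference is that you unroll the recursion explicitly and sum the errors, whereas the paper runs an induction with the strengthened hypothesis $\delta_{d,d}\ge\log d-6+10d^{-1/2}$ to absorb them; your constants ($\ln 16+20/7-1<6$) and the elementary inequalities you invoke all check out.
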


We did not attempt to optimize the constant $6$ in the exponent above. It can be done by a more careful analysis of the recursion from Theorem \ref{main} but the resulting bound is likely very far from optimal anyway. For a given $d$ it is more efficient to apply Theorem \ref{main} directly to obtain the best possible exponent. 

For arbitrary $k \le d$ we get results of the following form:

\begin{cor}\label{cor15}
For all $d\ge 3$ and $2\le k \le d+1$ we have $\Delta_{k, d}(n) \lesssim n^{-\frac{1}{2} \log \frac{d+2}{d-k+2}}$.
\end{cor}

The factor of $\frac{1}{2}$ in the bound can be improved to 1 at the expense of an additive constant error in the exponent, similar to Corollary \ref{cor2}. We decided to keep a slightly worse bound for simplicity.

Note that the bound on $\Delta_{k, d}(n)$ in Corollary \ref{cor15} improves the trivial upper bound (\ref{ez}) only if $k \ge (1-\varepsilon)d $ for a small constant $\varepsilon \approx 0.1$. For smaller values of $k$, Theorem \ref{main} gives the following improvement to (\ref{ez}):
\begin{cor}\label{cor3}
For any $d \ge 9$ and $3\sqrt{d} \le k \le d$ we have $\Delta_{k, d}(n) \lesssim n^{-\frac{k-1}{d} - \frac{k^2}{8d^2}}$.
\end{cor}

Note that if $2\le k \le (1-\varepsilon)d$, then by (\ref{ez}) we have $\Delta_{k, d}(n) \gtrsim n^{-\frac{k-1}{d-k+2}} \ge n^{-\frac{k-1}{d} - \frac{2k^2}{\varepsilon d^2}} $. So, in a way, the improvement in Corollary \ref{cor3} is optimal up to a constant factor of $16\varepsilon^{-1}$ for all $3\sqrt{d} \le k \le (1-\varepsilon)d$. 

For $k \le c \sqrt{d}$ our ideas do not lead to any improvements, so the Lefmann's bound (\ref{lef}) which holds for even $k$ is still the best known upper bound on $\Delta_{k,n}(n)$.

Another natural generalization of the Heilbronn's triangle problem is to ask for more points with convex hull of the smallest volume.
Namely, for each $k\ge d+1$, we can define a Heilbronn-type function $\Delta_{k, d}(n)$ as the minimum number $\Delta$ such that among any $n$ points in $[0, 1]^d$ there are $k$ points whose convex hull has $d$-dimensional volume at most $\Delta$. 

By a packing argument, for arbitrary $k > d \ge 2$ we have a trivial upper bound $\Delta_{k, d} \ll n^{-1}$ and no asymptotic improvements of this inequality were known prior to this work. For $d= 2$ the best lower bound is due to Lefmann \cite{L2}:
\begin{equation}\label{lef2}
\Delta_{k, 2}(n) \gtrsim (\log n)^{\frac{1}{k-2}} n^{-\frac{k-1}{k-2}}, 
\end{equation}
which is a logarithmic improvement over a bound by Bertram-Kretzberg, Hofmeister and Lefmann \cite{BHL}. A simple probabilistic argument gives the following lower bound for arbitrary $k > d \ge 2$:
\begin{equation}\label{lower}
    \Delta_{k, d}(n) \gtrsim n^{-\frac{k-1}{k-d}},
\end{equation}
see (\cite{L3}) for a proof. It seems likely that one should be able to improve (\ref{lower}) by a logarithmic factor similarly to (\ref{lef2}) but we do not pursue this direction here.

Our method allows us to improve the trivial upper bound on $\Delta_{k, d}(n)$ when $k$ is a bit larger than $d$. 

\begin{theorem}\label{largek}
    For any $d \ge 2$ and $d+3 \le k \le 2d$ we have $\Delta_{k, d}(n) \lesssim n^{- (1-\frac{1}{k-d-1}) (\log \frac{d}{k-d-1}-1.1)}$.
\end{theorem}

Note that $\Delta_{k,d}(n)$ is monotone increasing in $k$, so Theorem \ref{largek} can also be used for $k=d+2$, to say:
$$
\Delta_{d+2, d}(n) \lesssim \min_{k \ge d+3}  n^{- (1-\frac{1}{k-d}) (\log \frac{d}{k-d}-1.1)} \lesssim n^{- \log d + \log\log d + 10},
$$
where the right hand side is achieved at $k \approx d+\log d$. The $\log \log d$ term can be removed, in principle, by a more careful analysis like we do in Section \ref{sect1}. We decided to only include a simpler bound in Theorem \ref{largek} which applies to many pairs $k$ and $d$ and still gives essentially the strongest possible bounds.

Next, note that if $d$ is large enough then the bound in Theorem \ref{largek} is less than the trivial upper bound $n^{-1}$ for all $k \le 1.1 d$. That is, our result allows us to find up to $\approx 1.1 d$ points with convex hull of volume $\lesssim n^{-1-c}$ for some constant $c > 0$ in an arbitrary $n$-element set in $[0,1]^d$. The actual constant in place of $1.1$ can be improved somewhat by a more careful analysis in the proof of Theorem \ref{largek}.

It is also interesting to determine the smallest dimension $d$, in which one can show $\Delta_{d+2, d}(n)\lesssim n^{-1-c}$ for some $c>0$ (i.e. any set of $n$ points in $[0,1]^d$ contains $d+2$ points with convex hull volume at most $n^{-1-c}$). Our approach gives

\begin{prop}\label{smalld}
    We have $\Delta_{9, 7}(n) \lesssim n^{-\frac{23}{21}}$.
\end{prop}

It would be interesting to lower the dimension $d=7$ further (with the ultimate goal being showing that $\Delta_{4, 2}(n)\lesssim n^{-1-c}$).

It would be interesting to combine the approach introduced in this paper with the analytic method of Roth \cite{R3}, which was developed further by Koml\'os--Pintz--Szemer\'edi \cite{KPS2} and then by Cohen, Pohoata and the author \cite{CPZ}.
Note however that all these techniques applied to the `main' problem of estimating $\Delta_{d+1, d}$ have a natural barrier at the exponent $n^{-1-\frac12-\frac13-\ldots - \frac1d} = n^{- \log d + O(1)}$. So improving the main term in Corollary \ref{cor1} likely requires new ideas.
On the other hand, our argument seems rather wasteful for small values of $d$ so perhaps the bounds on $\Delta_{k,d}$ for small values of $d$ and $k$ can be improved further.

{\em Notation.} We use the standard asymptotic notation. If not specified, letters $c$ and $C$ usually denote constants not depending on $n$ and which might change from line to line. All logarithms are in the natural base.

\paragraph{Acknowledgements.} I thank Lisa Sauermann and Cosmin Pohoata for fruitful discussions.

\section{Proof of Theorem \ref{main}}\label{sect1}

For our argument it will be convenient to restate the problem in the following, essentially equivalent form. 
Fix $1 \le k \le d$. For a collection of vectors $v_1, \ldots, v_k \in \R^d$ let $\operatorname{Vol}_k(v_1, \ldots, v_k)$ be the $k$-dimensional volume of the parallelepided spanned by vectors $v_1, \ldots, v_k$. For brevity, we call $\operatorname{Vol}_k(v_1, \ldots, v_k)$ the determinant of the collection $v_1, \ldots, v_k$. Note that $\operatorname{Vol}_k(v_1, \ldots, v_k)$ equals to the square root of the determinant of the Gram matrix $(\langle v_i, v_j \rangle)_{i, j=1}^k$ of vectors $v_1, \ldots, v_k$.

We will use the following basic formula to estimate $\operatorname{Vol}_k(v_1, \ldots, v_k)$. We include a proof for completeness.
\begin{prop}\label{det}
Let $x_1, \ldots, x_{k} \in \R^d$ be arbitrary vectors, then for any $1\le \ell \le k$ we have
\begin{equation}\label{prod}
    \operatorname{Vol}_k(x_1, \ldots, x_k) = \operatorname{Vol}_\ell(x_1, \ldots, x_\ell) \operatorname{Vol}_{k-\ell}(x_{\ell+1}', \ldots, x_k'),
\end{equation}
where for $j=\ell+1,  \ldots, k$, we denote by $x_j'$ the projection of $x_j$ on the orthogonal complement to the space $V=\operatorname{span}(x_1, \ldots, x_\ell)$.
\end{prop}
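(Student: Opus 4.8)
The plan is to reduce the $k$-dimensional volume to a product by choosing a convenient orthonormal basis adapted to the nested structure. First I would let $V = \operatorname{span}(x_1,\dots,x_l)$ and fix an orthonormal basis $e_1,\dots,e_d$ of $\R^d$ such that $e_1,\dots,e_m$ span $V$ (where $m = \dim V \le l$) and $e_{m+1},\dots,e_d$ span $V^\perp$. Writing the coordinates of $x_1,\dots,x_k$ in this basis, the matrix $A$ whose columns are $x_1,\dots,x_k$ becomes block lower-triangular in the following sense: the last $d-m$ coordinates of $x_1,\dots,x_l$ all vanish, since those vectors lie in $V$. Hence $A$ has the block form $\begin{psmallmatrix} B & * \\ 0 & C \end{psmallmatrix}$ after reordering rows, where $B$ is the $m \times l$ block of $V$-coordinates of $x_1,\dots,x_l$, and $C$ is the $(d-m)\times(k-l)$ block of $V^\perp$-coordinates of $x_{l+1},\dots,x_k$, which are exactly the coordinates of the projections $x_{l+1}',\dots,x_k'$.

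Next I would use the Gram-matrix description of $\operatorname{Vol}_k$ mentioned just before the statement: $\operatorname{Vol}_k(x_1,\dots,x_k) = \sqrt{\det(A^\top A)}$. The key computation is that $A^\top A$ is block lower-triangular: its $(i,j)$ entry for $i \le l < j$ is $\langle x_i, x_j\rangle = \langle x_i, x_j'\rangle + \langle x_i, x_j - x_j'\rangle$, and since $x_i \in V$ while $x_j' \in V^\perp$ the first term is $0$, so the off-diagonal block equals $(\langle x_i, x_j - x_j'\rangle)$; wait — this is not obviously zero. The cleaner route is to instead observe directly from the block form of $A$ that $A^\top A = \begin{psmallmatrix} B^\top B & B^\top D \\ D^\top B & D^\top D + C^\top C\end{psmallmatrix}$ where $D$ is the $m\times(k-l)$ block of $V$-coordinates of $x_{l+1},\dots,x_k$; this is not block-triangular either. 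So instead I would compute $\det(A^\top A)$ via a Cauchy–Binet / Schur-complement argument, or — simplest of all — directly bound via the geometric definition: $\operatorname{Vol}_k(x_1,\dots,x_k)$ equals the $l$-volume of the base $\{x_1,\dots,x_l\}$ times the $(k-l)$-volume of the "height" obtained by projecting $x_{l+1},\dots,x_k$ onto $V^\perp$. This is the standard "base times height" decomposition of parallelepiped volume, applied $(k-l)$ times (or in one shot): the parallelepiped spanned by all $k$ vectors has the same $k$-volume whether we use $x_j$ or $x_j - (\text{any vector in } V)$ for $j > l$, because shearing by vectors in the span of the base does not change volume; after such shearing we may replace each $x_j$ ($j>l$) by $x_j'$, and now the first $l$ vectors lie in $V$ and the last $k-l$ lie in $V^\perp$, so the Gram matrix $(\langle \cdot,\cdot\rangle)$ genuinely is block-diagonal with blocks $(\langle x_i,x_j\rangle)_{i,j\le l}$ and $(\langle x_i',x_j'\rangle)_{i,j>l}$, giving $\det = \det(\text{first block})\cdot\det(\text{second block})$ and hence the claimed product after taking square roots.

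The main obstacle — really the only subtlety — is justifying that the shearing step does not change $\operatorname{Vol}_k$: replacing $x_j \mapsto x_j - w_j$ with $w_j \in V = \operatorname{span}(x_1,\dots,x_l)$ for each $j > l$ is an invertible linear map on $\operatorname{span}(x_1,\dots,x_k)$ of determinant $1$ (it is unipotent, being upper-triangular with $1$'s on the diagonal in a basis extending $x_1,\dots,x_l$), so it preserves $k$-dimensional volume. Choosing $w_j = x_j - x_j'$ (the $V$-component of $x_j$) makes $x_j - w_j = x_j'$, completing the reduction. I would also briefly note the edge cases: if $x_1,\dots,x_l$ are linearly dependent then both sides are $0$ (the left side because $\{x_1,\dots,x_k\}$ is dependent, the right because $\operatorname{Vol}_l(x_1,\dots,x_l)=0$), so we may assume $\dim V = l$ throughout.
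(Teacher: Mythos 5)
Your final argument is exactly the paper's: replace $x_j$ by $x_j'$ for $j>l$ via a unipotent (determinant-one) column operation, note the Gram determinant is unchanged, and observe that the new Gram matrix is block diagonal because $x_i\perp x_j'$ for $i\le l<j$. The proposal is correct and takes essentially the same route, just with some extra back-and-forth before arriving at it.
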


\begin{proof}
View $x_1, \ldots, x_k$ as $d\times 1$ column vectors and consider the matrices $A= (x_1, \ldots, x_k)$ and $A' = (x_1, \ldots, x_\ell, x_{\ell+1}', \ldots, x_k')$. Note that $A' = A U$ for some upper triangular $k\times k$ matrix $U$ with all ones on the diagonal. The Gram matrices of the collections $x_1, \ldots, x_k$ and $x_1, \ldots, x_\ell, x_{\ell+1}', \ldots, x_k'$ are given by $G = A^T A$ and $G' = A'^T A'$, respectively. It follows that $\det G' = \det U^T G U = \det G$.
The Gram matrix $G'$ splits into $\ell\times \ell$ and $k-\ell \times k-\ell$ blocks which are the Gram matrices of the collections $x_1, \ldots, x_\ell$ and $x'_{\ell+1}, \ldots, x'_{k}$, respectively. This implies (\ref{prod}).
\end{proof}

Now for any $2\le k \le d+1$ define $\Delta'_{k, d}(n)$ to be the minimum $\Delta'$ such that any collection $X$ of $n$ unit vectors in $\R^{d+1}$ contains $k$ vectors with determinant at most $\Delta'$. 

\begin{prop}\label{reduction}
For any $2\le k \le d+1$ there are constants $c, c', C$ such that for all $n > k$:
$$
c \Delta_{k, d}(n) \le \Delta'_{k, d}(n) \le C\Delta_{k, d}(c' n).
$$
\end{prop}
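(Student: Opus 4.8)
The plan is to make precise the idea that ``homogenise and normalise'', $p\mapsto (p,1)/\lVert (p,1)\rVert$, sends points of a bounded region of $\R^d$ to unit vectors of $\R^{d+1}$ while distorting the relevant volumes by only a bounded factor. Concretely, the heart of the matter is the following claim: there are constants $0<c_1(d,k)\le c_2(d,k)$ so that for any $p_0,\dots,p_k\in[-R,R]^d$ (with $R$ a fixed constant), setting $v_i=(p_i,1)/\lVert (p_i,1)\rVert\in\R^{d+1}$, one has
$$
c_1\cdot\operatorname{Vol}_k(p_1-p_0,\dots,p_k-p_0)\le \operatorname{Vol}_{k+1}(v_0,\dots,v_k)\le c_2\cdot\operatorname{Vol}_k(p_1-p_0,\dots,p_k-p_0),
$$
noting that $\operatorname{Vol}_k(p_1-p_0,\dots,p_k-p_0)=k!$ times the $k$-volume of the simplex $\operatorname{conv}\{p_0,\dots,p_k\}$. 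Granting the claim, both inequalities of the proposition follow quickly.

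To prove the claim, set $\hat p_i=(p_i,1)\in\R^{d+1}$. Subtracting the column $\hat p_0$ from $\hat p_1,\dots,\hat p_k$ is multiplication on the right by a unitriangular matrix, so it preserves the $(k+1)$-volume; thus $\operatorname{Vol}_{k+1}(\hat p_0,\dots,\hat p_k)=\operatorname{Vol}_{k+1}(\hat p_0,\hat p_1-\hat p_0,\dots,\hat p_k-\hat p_0)$. The vectors $\hat p_i-\hat p_0=(p_i-p_0,0)$ lie in $\R^d\times\{0\}$, so $e_{d+1}$ is orthogonal to their span $W$; applying Proposition~\ref{det} (which, being defined via the Gram determinant, is insensitive to the order of the vectors) with $l=k$ gives $\operatorname{Vol}_{k+1}(\hat p_0,\dots,\hat p_k)=\operatorname{Vol}_k(p_1-p_0,\dots,p_k-p_0)\cdot\lVert \hat p_0'\rVert$, where $\hat p_0'$ is the projection of $\hat p_0$ onto $W^\perp$. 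Since $e_{d+1}\in W^\perp$ and the last coordinate of $\hat p_0$ is $1$, we get $1\le \lVert \hat p_0'\rVert\le \lVert\hat p_0\rVert\le\sqrt{R^2 d+1}$. Finally $\operatorname{Vol}_{k+1}(v_0,\dots,v_k)=\operatorname{Vol}_{k+1}(\hat p_0,\dots,\hat p_k)/\prod_i\lVert\hat p_i\rVert$ with each $\lVert\hat p_i\rVert\in[1,\sqrt{R^2 d+1}]$; collecting the bounded factors (and using $k\le d$) proves the claim.

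For the lower bound $c\,\Delta_{k,d}(n)\le\Delta'_{k,d}(n)$: take an arbitrary configuration of $n$ points in $[0,1]^d$, pass to the $n$ unit vectors $v_i=(p_i,1)/\lVert(p_i,1)\rVert$; by definition of $\Delta'_{k,d}(n)$ some $k+1$ of them have determinant at most $\Delta'_{k,d}(n)$, and by the claim (with $R=1$) the corresponding $k+1$ points span a simplex of $k$-volume at most $c_1^{-1}\Delta'_{k,d}(n)/k!$; taking the supremum over configurations gives the bound. For the upper bound $\Delta'_{k,d}(n)\le C\,\Delta_{k,d}(c'n)$: fix $\theta_0=\pi/3$ and cover $S^d\subset\R^{d+1}$ by $M=M(d)$ spherical caps of angular radius $\theta_0$. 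Given $n$ unit vectors, by pigeonhole at least $m:=\lceil n/M\rceil$ of them lie in one cap; rotate so its axis is $e_{d+1}$, so every such vector $v=(w,t)$ has $t\ge\cos\theta_0=\tfrac12$. The central projection onto $\{x_{d+1}=1\}$ sends $v\mapsto(p,1)$ with $p=w/t$, $\lVert p\rVert\le\sqrt3$, and one checks $v=(p,1)/\lVert(p,1)\rVert$ exactly, so the claim applies with $R=\sqrt3$. After translating and scaling by a fixed $\lambda=\lambda(d)$ these $m$ points lie in $[0,1]^d$; by definition of $\Delta_{k,d}$ some $k+1$ of them span a simplex of $k$-volume at most $\Delta_{k,d}(m)$, hence, undoing the scaling and applying the claim, the corresponding $k+1$ unit vectors have determinant at most $c_2\lambda^{-k}k!\,\Delta_{k,d}(m)\le c_2\lambda^{-k}k!\,\Delta_{k,d}(\lceil n/M\rceil)$, using that $\Delta_{k,d}$ is non-increasing in $n$. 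This gives the bound with $c'=1/M$, up to harmless adjustments of the constants for bounded $n$ (where all quantities are between positive constants depending only on $d$).

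The main obstacle is the upper bound: points of $[0,1]^d$ form a bounded set with boundary whereas unit vectors fill the whole sphere, so there is no single global bi-Lipschitz chart and one must localise to a cap, paying the factor $M(d)$; ensuring every constant depends only on $d$ (with $k\le d$) and noting that the antipodal ambiguity on the sphere is harmless (since $\operatorname{Vol}_{k+1}$ is unchanged under negating any argument) are the points needing care. The volume identity itself is routine given Proposition~\ref{det}.
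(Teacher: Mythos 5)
Your proof is correct and follows essentially the same route as the paper: lift via $p \mapsto (p,1)/\lVert(p,1)\rVert$ for one inequality, and for the other extract a constant fraction of the sphere lying in a single chart and centrally project back to a bounded cube, with all distortions controlled by constants depending only on $d$. The only differences are cosmetic --- you pigeonhole over a fixed cover by caps where the paper uses a random rotation, and your volume identity (with the extra factor $\lVert \hat p_0'\rVert \in [1,\sqrt{R^2d+1}]$) is actually more precise than the paper's asserted exact equality with $\frac{1}{k!}\operatorname{Vol}_{k+1}$, which is exact only when $k=d$.
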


\begin{proof}
Let $X \subset [0, 1]^d$ be an $n$-element set of points. Recall that for any points $x_1, \ldots, x_{k} \in \R^{d}$ the volume of the simplex spanned by them is equal to $\frac{1}{k!} \operatorname{Vol}_{k}((x_1, 1), \ldots, (x_{k}, 1))$.
Define a set $Y \subset S^d \subset \R^{d+1}$ as follows:
$$
Y = \left\{ \frac{(x, 1)}{\sqrt{|x|^2+1}},~x \in X \right\}.
$$
Note that the volumes of $(k-1)$-dimensional simplices in $X$ now precisely correspond to determinants of $k$-tuples of vectors $(x, 1)$, $x\in X$. Since for any $x \in [0, 1]^d$ we have $\sqrt{|x|^2+1} \in [1, \sqrt{d+1}]$, renormalization by these factors changes the determinants only by a constant factor. Thus, if $X$ does not contain $k$ points with volume at most $\Delta$ then $Y$ does not contain $k$ points with determinant at most $c \Delta$ for some $c > 0$ depending only on $d$ and $k$. This shows the first inequality.

Similarly, suppose that $Y \subset S^d$ has size $n$ and any $k$ points of $Y$ have determinant at least $\Delta'$. Consider the central projection of the sphere $S^d$ on the hyperplane $W = \{x ~|~ x_{d+1} = 1\}$. Then for some $c' > 0$ and a random rotation of $Y$ we get that at least $c' n$ points of $Y$ are projected in the cube $[0, 1]^d \times \{1\}$. Define $X$ to be the set of these points and note that $X$ does not contain $(k-1)$-simplices of volume $\Delta'/C$. This completes the proof of the second inequality.
\end{proof}

In light of Proposition \ref{reduction}, Theorem \ref{main} follows from an analogous statement about $\Delta'$.

\begin{lemma}
For any $2\le k \le d+1$, $1\le \ell <k$ and $n > k$ we have
$$
\Delta'_{k,d}(n) \le \Delta'_{\ell, d}(n) \Delta'_{k-\ell, d-\ell}(n-\ell).
$$
\end{lemma}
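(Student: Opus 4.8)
The strategy is to take an arbitrary collection $X$ of $n$ unit vectors in $\R^{d+1}$ and, assuming every $l+1$ of them has determinant at least $\Delta'_{l,d}(n)$ is \emph{not} what we want — rather, we want to produce $k+1$ vectors of small determinant, so we argue by combining a small-$l$-determinant tuple with a small determinant in the orthogonal complement. Concretely, since $|X| = n > l$, by definition of $\Delta'_{l,d}(n)$ there exist $l+1$ vectors among them, say $x_1, \ldots, x_{l+1}$, with $\operatorname{Vol}_{l+1}(x_1, \ldots, x_{l+1}) \le \Delta'_{l,d}(n)$. Let $V = \operatorname{span}(x_1, \ldots, x_{l+1})$, a subspace of dimension at most $l+1$, and let $\pi$ denote orthogonal projection onto $V^\perp$.

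\textbf{Projecting the rest.} For each of the remaining $n - l - 1$ vectors $x$, consider $\pi(x) \in V^\perp$. After discarding the (at most $l+1$-dimensional, hence measure-zero-on-the-sphere, but in any case we can perturb or just note they are finitely many) exceptional cases, we may assume $\pi(x) \neq 0$ and normalize: set $\hat x = \pi(x)/|\pi(x)| \in S(V^\perp)$. Since $|\pi(x)| \le |x| = 1$, the unit vectors $\hat x$ lie in $V^\perp$, which is a Euclidean space of dimension $\ge d+1 - (l+1) = d - l$. Thus we have (at least) $n - l - 1$ unit vectors in a space of dimension $(d-l-1)+1$, so by definition of $\Delta'_{k-l-1,d-l-1}$ — applied with the parameter $n - l - 1$, which is why that argument appears in the statement — there exist $k - l$ of them, say $\hat x_{l+2}, \ldots, \hat x_{k+1}$, with $\operatorname{Vol}_{k-l}(\hat x_{l+2}, \ldots, \hat x_{k+1}) \le \Delta'_{k-l-1, d-l-1}(n - l - 1)$.

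\textbf{Combining via Proposition~\ref{det}.} Now apply Proposition~\ref{det} to the $k+1$ vectors $x_1, \ldots, x_{l+1}, x_{l+2}, \ldots, x_{k+1}$ with the splitting parameter $l+1$ in place of the $l$ there: this gives
\[
\operatorname{Vol}_{k+1}(x_1, \ldots, x_{k+1}) = \operatorname{Vol}_{l+1}(x_1, \ldots, x_{l+1}) \cdot \operatorname{Vol}_{k-l}(x_{l+2}', \ldots, x_{k+1}'),
\]
where $x_j' = \pi(x_j)$. Since $x_j' = |\pi(x_j)| \, \hat x_j$ with $|\pi(x_j)| \le 1$, the volume $\operatorname{Vol}_{k-l}(x_{l+2}', \ldots, x_{k+1}')$ is at most $\operatorname{Vol}_{k-l}(\hat x_{l+2}, \ldots, \hat x_{k+1})$ by multilinearity of the determinant in the columns (scaling a column by a factor in $[0,1]$ only decreases the absolute value of the determinant, hence the volume). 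Combining the two displayed bounds,
\[
\operatorname{Vol}_{k+1}(x_1, \ldots, x_{k+1}) \le \Delta'_{l,d}(n) \cdot \Delta'_{k-l-1,d-l-1}(n-l-1),
\]
which is exactly the claimed inequality.

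\textbf{Main obstacle.} The routine steps (invoking the two definitions, applying Proposition~\ref{det}, the multilinearity estimate) are all straightforward; the only genuine care needed is the handling of the vectors $x$ with $\pi(x) = 0$, i.e.\ $x \in V$. There can be at most $l+1$ linearly independent such vectors, but there could in principle be more than $l+1$ of them lying in $V$ — however, if there are $\ge l+2$ vectors of $X$ in the $(l+1)$-dimensional space $V$, they are linearly dependent and already span a degenerate $(l+1)$-simplex of volume $0 \le \Delta'_{l,d}(n)\Delta'_{k-l-1,d-l-1}(n-l-1)$ among the first block, or more simply we can just take $k+1$ of these dependent vectors directly to get determinant $0$; so that case is trivially fine. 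Otherwise, at most $l+1$ vectors are removed, leaving $\ge n - l - 1$ vectors with $\pi(x) \neq 0$, exactly as the second invocation requires. I expect verifying that the parameter bookkeeping ($n-l-1$ points, dimension $d-l-1$, $k-l$ vectors needed) lines up with the definition of $\Delta'$ to be the one spot where an off-by-one slip is easy, so that is what I would check most carefully.
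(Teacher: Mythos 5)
Your proposal is correct and follows essentially the same argument as the paper: extract an $(l+1)$-tuple of determinant at most $\Delta'_{l,d}(n)$, project and renormalize the remaining vectors onto the orthogonal complement, apply the definition of $\Delta'_{k-l-1,d-l-1}(n-l-1)$ there, and combine via Proposition~\ref{det} together with the observation that rescaling $\pi(x)$ to a unit vector only increases the volume. The only (immaterial) difference is that the paper dispatches the degenerate cases by a general-position perturbation, whereas you handle the vectors with $\pi(x)=0$ by a direct case analysis.
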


\begin{proof}
Let $X \subset S^d$ be an $n$-element set, we need to show that $X$ contains $k$ points with determinant at most $\Delta'_{\ell, d}(n) \Delta'_{k-\ell, d-\ell}(n-\ell)$. A small perturbation of points changes the determinants by a small amount, so we may assume that $X \subset S^d$ is a set of points in general position.

By the definition of $\Delta'_{\ell, d}$ there are points $x_1, \ldots, x_{\ell} \in X$ such that $\operatorname{Vol}_{\ell}(x_1, \ldots, x_{\ell}) \le \Delta'_{\ell, d}(n)$. 
Let $V = \operatorname{span}(x_1, \ldots, x_{\ell})$ and let $\pi$ be the projection on the orthogonal complement of $V$.
Consider the set $X' = \pi(X \setminus \{x_1, \ldots, x_{\ell}\})$ and define $Y = \{x / |x|, ~x \in X'\}$. By the general position assumption, $Y$ is a well-defined $n-\ell$-element set of points lying on a $d-\ell$-dimensional unit sphere $S^{d-\ell} \subset V^\perp$. 

So, by the definition of $\Delta_{k-\ell, d-\ell}(n-\ell)$, there exists a collection of points $y_1, \ldots, y_{k-\ell} \in Y$ such that $\operatorname{Vol}_{k-\ell}(y_1, \ldots, y_{k-\ell}) \le \Delta_{k-\ell, d-\ell}(n-\ell)$. Consider the preimages $z_1, \ldots, z_{k-\ell} \in X$ of these points. By Proposition \ref{det} we have
$$
\operatorname{Vol}_{k}(x_1, \ldots, x_{\ell}, z_1, \ldots, z_{k-\ell}) = \operatorname{Vol}_{\ell}(x_1, \ldots, x_{\ell})\operatorname{Vol}_{k-\ell}(\pi(z_1), \ldots, \pi(z_{k-\ell})) \le 
$$
$$
\le \operatorname{Vol}_{\ell}(x_1, \ldots, x_{\ell}) \operatorname{Vol}_{k-\ell}(y_1, \ldots, y_{k-\ell}) \le \Delta_{\ell,d}(n)\Delta_{k-\ell, d-\ell}(n-\ell),
$$
where in the first inequality we replaced vectors $\pi(z_j)$ with longer vectors $y_j = \pi(z_j)/ |\pi(z_j)|$ which increases the determinant. This completes the proof of the lemma, and by Proposition \ref{reduction}, of Theorem \ref{main}.
\end{proof}

\section{Proofs of corollaries}

For $2\le k\le d+1$ let us denote by $\delta_{k, d}$ the maximum number such that $\Delta_{k, d}(n) \ll n^{-\delta_{k, d}}$ holds for large $n$. We ignore the technical issues regarding the existence of the maximum and only use this notation for a cleaner presentation of the computations.

The bounds (\ref{ez}) and (\ref{lef}) imply that $\delta_{k, d} \ge \frac{k-1}{d}$ for all $2\le k\le d+1$ and $\delta_{k, d} \ge \frac{k-1}{d}+\frac{k-2}{2d(d-1)}$ for even $k$. Theorem \ref{main} now states that
\begin{equation}\label{rec}
\delta_{k, d} \ge \delta_{\ell, d} + \delta_{k-\ell, d-\ell},
\end{equation}
for all $2\le k\le d+1$ and $1\le \ell < k$, where we put $\delta_{1, d} = 0$ for convenience. Finally, the result of Cohen, Pohoata and the author implies that $\delta_{3, 2} \ge \frac87 + \frac{1}{2000}$.
We are now ready to perform the calculations for small $d$:
\begin{itemize}
    \item $\delta_{4, 3} \ge \delta_{2, 3} + \delta_{2, 1} \ge \frac13+1 = \frac43$,
    \item $\delta_{5, 4} \ge \delta_{3, 4} + \delta_{2, 1} \ge \frac24+1 = \frac32$,
    \item $\delta_{6, 5} \ge \delta_{4, 5} + \delta_{2, 1} \ge (\frac35+\frac{1}{20})+1 = \frac{33}{20}$,
    \item $\delta_{7, 6} \ge \delta_{4, 6} + \delta_{3, 2} \ge (\frac36 + \frac{1}{30}) + (\frac87+\frac{1}{2000}) \ge \frac{176}{105} > 1.676$. Note that all other ways to apply (\ref{rec}) lead to a slightly weaker exponent $5/3$: $\delta_{7, 6} \ge \delta_{2, 6} + \delta_{5, 4} \ge \frac16+\frac32 = \frac53$, $\delta_{7, 6} \ge \delta_{3, 6} + \delta_{4, 3} \ge \frac53$, and $\delta_{7, 6} \ge \delta_{5, 6} + \delta_{2, 1} \ge \frac53$,
    \item $\delta_{8, 7} \ge \delta_{2, 7} + \delta_{6, 5} \ge \frac17+\frac{33}{20} > 1.792$,
    \item $\delta_{9, 8} \ge \delta_{3, 8}+\delta_{6, 5} \ge \frac28+\frac{33}{20} = \frac{19}{10}$.
\end{itemize}
This proves Corollary \ref{cor2}. 

Using induction, we are going to show that for all $d\ge 3$ we have
\begin{equation}\label{log}
\delta_{d+1, d} \ge \log d - 6 + 10 d^{-\frac12}.
\end{equation}
This can be verified directly for $d \in \{3, \ldots, 8\}$.
The relation (\ref{rec}) applied with $l=1$ implies that the sequence $\delta_{d+1, d}$ is non-decreasing and so, in particular, $\delta_{d+1, d} \ge 1.9$ holds for all $d \ge 8$. This implies (\ref{log}) for all $3 \le d \le 100$.

Now let $d > 100$ be arbitrary and suppose that (\ref{log}) is true for all $3\le d' < d$. Let $1\le t \le d/2$ be an integer parameter which we will optimize later and apply (\ref{rec}) with $\ell=t$. Then by the induction assumption we get
$$
\delta_{d+1, d} \ge \delta_{t, d} + \delta_{d+1-t, d-t} \ge \frac{t-1}{d} + \log (d-t) - 6 + 10 (d-t)^{-\frac12},
$$
for any $x \in [0, 0.5]$ we have $\log(1-x) \ge -x-x^2$ and $(1-x)^{-\frac12} \ge 1+x/2$, so that we get 
$$
\log (d-t) \ge \log d - \frac{t}{d} - \frac{t^2}{d^2},
$$
$$
(d-t)^{-\frac12} = d^{-\frac12} \left(1-\frac{t}{d}\right)^{-\frac12} \ge d^{-\frac12} + \frac12 t d^{-\frac32}
$$
and so we conclude that
$$
\delta_{d+1, d} \ge \frac{t-1}{d} + \log d - \frac{t}{d} - \frac{t^2}{d^2} - 6 + 10 d^{-\frac12} + 5 t d^{-\frac32} = \log d - 6 + 10 d^{-\frac12} - \frac1d - \frac{t^2}{d^2} + 5t d^{-\frac32},
$$
note that for $d > 100$ we have $2 \sqrt{d} < d/2$ and let $t$ be an arbitrary integer in the interval $[\sqrt{d}, 2\sqrt{d}]$, then we get $-\frac{1}{d}-\frac{t^2}{d^2} + 5t d^{-\frac32} \ge 0$ and so $\delta_{d+1, d} \ge \log d -6 +10 d^{-\frac12}$. This completes the induction step and so (\ref{log}) is true for all $d \ge 3$.
By dismissing the additional term $10 d^{-\frac12}$ we conclude that $\delta_{d+1, d} \ge \log d -6$ for all $d\ge3$.

Now we prove Corollary \ref{cor15}. Let $2 \le k < d+1$ be arbitrary. By (\ref{rec}) applied with $\ell =2$,
$$
\delta_{k,d} \ge \delta_{k-2, d-2} + \delta_{2, d},
$$
iterating this until $k \ge 1$, gives
$$
\delta_{k,d} \ge \delta_{2, d} + \delta_{2, d-2} + \ldots + \delta_{2, d - 2 [k-1/2]} = \frac{1}{d} + \frac{1}{d-2} + \ldots + \frac{1}{d - 2 [\frac{k-1}{2}]} \ge 
$$
$$
\ge \frac{1}{2} \log \frac{d+2}{d-k+2},
$$
as claimed.

Now we prove Corollary \ref{cor3}. Let $d\ge 9$ and $3\sqrt{d} \le k \le d$. Apply (\ref{rec}) with $\ell = [k/2]$:
$$
\delta_{k, d} \ge \delta_{\ell, d} + \delta_{k - \ell, d-\ell} \ge \frac{\ell-1}{d} + \frac{k-\ell -1}{d-\ell} = \frac{k-1}{d} + \frac{(k-\ell)\ell - d}{d(d-\ell)} \ge  
$$
$$
\ge \frac{k-1}{d} + \frac{(k^2-1)/4 - d}{d^2} \ge \frac{k-1}{d} + \frac{k^2}{8d^2},
$$
as claimed.

\section{Simplices with faces of small volume}

To prove bounds on the Heilbronn functions $\Delta_{k, d}(n)$ with $k > d$ we will need the following lemma. Roughly speaking, the lemma states that any set of size $n$ in $[0,1]^d$ contains a simplex on $d+1$ vertices $S$, such that the volume of $S$ is small and, moreover, for all not too large $\ell$, all codimension $\ell$ faces of $S$ also have small volume. 

\begin{lemma}\label{useful}
    Let $a, t \ge 0$ and let $d=d_0 > d_1 > \ldots > d_t > 0$ be a sequence of positive integers with $d_{i}-d_{i+1} \ge a+1$ for all $i=0, \ldots, t-1$ and $d_t \ge a$.
    Let $X \subset [0, 1]^{d}$ be a set of $n$ points in general position. Then there exists a subset $Y \subset X$ of $d+1$ points with the following property. For any $0 \le \ell \le a$ and any subset $Y' \subset Y$ of size $d + 1 - \ell$ we have
    \begin{equation}\label{vol2}
    \operatorname{Vol}_{d-\ell}(\operatorname{conv}(Y')) \le C n^{- \gamma_{\ell}},    
    \end{equation}
    where
    \begin{equation}\label{gamma2}
    \gamma_{\ell} = \frac{d_0-d_1-1}{d_0} + \frac{d_1-d_2-1}{d_1} + \ldots + \frac{d_{t-1}-d_{t}-1}{d_{t-1}} + \frac{d_t - \ell}{d_t}.    
    \end{equation}
\end{lemma}

\begin{proof}
    The proof is by induction on $t$. First, let us reformulate the statement a little bit to make the induction cleaner. Instead of a subset $X \subset [0, 1]^d$ we consider an $n$-element set on the sphere $X \subset S^d$. We want to find a subset $Y \subset X$ of size $d+1$ such that $\operatorname{Vol}_{d+1-\ell}(Y') \lesssim n^{-\gamma_\ell}$ holds for any $\ell \le a$ and any $(d+1-\ell)$-element subset $Y'\subset Y$ (where by $\operatorname{Vol}_{d+1-\ell}$ we mean the function defined in Section \ref{sect1}). This is an equivalent problem (up to a constant factor) as any subset $X \subset [0,1]^d$ can be mapped to a subset on the sphere given by:
    $$
    X' = \left\{ \frac{(x,1)}{|(x,1)|},~ x \in S^d \right\},
    $$
    and any set on the sphere $X' \subset S^d$ contains a linearly sized subset which can be projected back into $[0,1]^d$. These operations preserve volumes of the convex hulls up to a constant factor, so it is sufficient to work with the spherical formulation of the problem.

     Let $t = 0$, and consider some $d=d_0 > 0$. Let $X \subset S^d$ be an $n$-element set. By pigeonhole principle, there exists a $(d+1)$-element subset $Y \subset X$ of diameter at most $c n^{-1/d}$ for some constant $c$ depending on $d$. Then for any $0\le \ell \le d$ any $(d+1-\ell)$-element subset $Y' \subset Y$ has volume at most $C n^{-\frac{d-\ell}{d}}$. Note that $d_0\ge a$ by assumption so we get the desired bound for all $0\le \ell \le a$.

    Now let $t\ge 1$ and consider a sequence $d= d_0 > \ldots > d_t > 0$ such that $d_i-d_{i+1} \ge a+1$ and $d_t \ge a$. Let $X \subset S^d$ be an $n$-element set in general position. By pigeonhole principle, there exists a $(d_0-d_1)$-element subset $Z \subset X$ of diameter at most $c n^{-1/d}$. Let $V = \operatorname{span}(Z)$ and let $\pi$ be the projection on the orthogonal complement of $V$.
    Consider the set $X' = \pi(X \setminus Z)$ and define $X'' = \{x / |x|, ~x \in X'\}$. By the general position assumption, $X''$ is an $n-(d_0-d_1)$-element set of points lying on a $d_1$-dimensional unit sphere $S^{d_1} \subset V^\perp$. Apply the induction assumption to the set $X''$ with parameters $a, t-1$ and the sequence $d_1 > \ldots > d_t$. We obtain a $(d_1+1)$-element set $Y' \subset X'' \subset S^{d_1}$ such that for any $0 \le \ell \le a$ and any subset $Y'' \subset Y'$ of size $d_1+1-\ell$ satisfies
    \begin{equation}\label{gamma}
    \operatorname{Vol}_{d_1+1-\ell}(Y'') \le C n^{-\gamma'_\ell}, 
    \end{equation}
    where 
    $$
    \gamma'_\ell = \frac{d_1-d_2-1}{d_1} + \frac{d_2-d_3-1}{d_2} +  \ldots + \frac{d_{t-1}-d_{t}-1}{d_{t-1}} + \frac{d_t - \ell}{d_t}.
    $$
    Let $Z'$ be the preimage of $Y'$ in $X$ (which is well-defined thanks to the general position assumption).
    Now define $Y = Z \cup Z'$, we claim that $Y$ satisfies the desired property. Indeed, let $0 \le \ell \le a$ be arbitrary and let $W \subset Y$ be an $(d+1-\ell)$-element subset. Write $W_1 = W \cap Z$ and $W_2 = W \cap Z'$. Note that by assumption $d_0-d_1 \ge a+1$ and so
    $$
    |W_1| = |W\cap Z| \ge |Z| - |Y \setminus W| \ge (d_0-d_1) - \ell \ge (a+1) - a > 0,
    $$
    i.e. $W_1$ is non-empty. By a similar computation, $W_2$ is non-empty as well. Denote $m = |W_1|$.
    Thus, by Proposition \ref{det} applied with $\ell = m$ we have
    $$
    \operatorname{Vol}_{d+1-\ell}(W) = \operatorname{Vol}_{m}(W_1) \operatorname{Vol}_{d+1-\ell -m}(W'_2),
    $$
    where $W_2' = \pi(W_2)$. Since $W_1 \subset Z$, the diameter of $W_1$ is at most $C n^{-1/d}$ and so $\operatorname{Vol}_{m}(W_1) \lesssim n^{-\frac{m-1}d}$.
    Let $W_2'' = \{w/|w|, ~w \in W_2'\}$ then $W_2''$ is a subset in $Y'$ of size $d + 1 - \ell - m$. Let 
    $$
    \ell' = |Y'| - |W_2''| = (d_1+1) - (d + 1 - \ell - m) = \ell + m +d_1 - d_0.
    $$
    Note that we have $m \le d_0-d_1$ and so $\ell' \le \ell \le a$ holds. 
    So by (\ref{gamma}) applied to $Y'' = W_2''$ we get
    $$
    \operatorname{Vol}_{d+1-\ell - m}(W'_2) \le \operatorname{Vol}_{d+1-\ell - m}(W''_2) \le C n^{-\gamma'_{\ell'}}.
    $$
    Combining the bounds above leads to an upper bound
    $$
    \operatorname{Vol}_{d+1-\ell}(W) \le C n^{- \frac{m-1}{d_0} - \gamma'_{\ell'}}.
    $$
    Expanding the number in the exponent gives
    \begin{align*}
    &\frac{m-1}{d_0} + \gamma'_{\ell'} = \frac{m-1}{d_0} + \gamma'_{\ell + m + d_1 - d_0} = \\
    &= \frac{m-1}{d_0} + \frac{d_1-d_2-1}{d_1} + \ldots + \frac{d_{t-1}-d_{t}-1}{d_{t-1}} + \frac{d_t - \ell - m - d_1 + d_0}{d_t} = \\
    &= \frac{m +d_1 -d_0}{d_0} + \frac{d_0-d_1-1}{d_0} + \frac{d_1-d_2-1}{d_1} + \ldots + \frac{d_{t-1}-d_{t}-1}{d_{t-1}} + \frac{d_t - \ell - m}{d_t} - \frac{m+d_1-d_0}{d_t}=\\
    &= \frac{m +d_1 -d_0}{d_0} + \gamma_\ell - \frac{m+d_1-d_0}{d_t} \ge \gamma_\ell
    \end{align*}
    where in the last line we used $m+d_1-d_0 \le 0$ and $d_0 > d_t$. We conclude that $\operatorname{Vol}_{d+1-\ell}(W) \le C n^{-\gamma_\ell}$ holds with $\gamma_\ell$ as in (\ref{gamma2}). This shows that the constructed set $Y$ satisfies the desired property and the induction step is complete. 
\end{proof}

\section{Proof of Theorem \ref{largek} and Proposition \ref{smalld}}

Let $a = k-d-1$, $t = [\frac{d}{a+1}]-1$ (note that $t\ge 0$ since $k \le 2d$) and consider the sequence of integers $d_i = d - i (a+1)$, for $i = 0, \ldots, t$. Let $X \subset [0,1]^d$ be a set of size $n$. 

Apply Lemma \ref{useful} to $X$ with parameters $a$, $t$ and the sequence $d_0 > \ldots > d_t$. We obtain a subset $Y\subset X$ of size $d+1$ such that for any $\ell \le a$ and any $Y'\subset Y$ of size $d+1-\ell$ we have
$$
\operatorname{Vol}_{d-\ell}(\operatorname{conv}(Y')) \lesssim n^{-\gamma_\ell},
$$
where
$$
\gamma_\ell = \frac{d_0-d_1-1}{d_0} +\frac{d_1-d_2-1}{d_1} + \ldots + \frac{d_{t-1}-d_t-1}{d_{t-1}} + \frac{d_t-\ell}{d_t}=
$$
$$
= \frac{a}{d_0} + \frac{a}{d_1} + \ldots + \frac{a}{d_{t-1}} + \frac{d_t - \ell}{d_t}.
$$
Using the bounds $d_i \le (t-i+2) (a+1)$ and $d_t \ge a+1$, leads to an estimate
$$
\gamma_\ell \ge \frac{a}{(t+2)(a+1)} + \frac{a}{ (t+1)(a+1)} + \ldots + \frac{a}{3(a+1)} + 1 - \frac{\ell}{a+1} \ge \frac{a}{a+1} \log \frac{t+3}{3} + 1 -\frac{\ell}{a+1}.
$$

Let $Z \subset X\setminus Y$ be a set of size $k-d-1 = a$ and diameter $\lesssim n^{-1/d}$. We claim that the set $S = Y \cup Z$ works. We will use the following simple volume estimate: 

\begin{prop}
    Given finite sets $A, B \subset [0,1]^d$ we have
$$
\operatorname{Vol}_d(\operatorname{conv}(A \cup B)) \lesssim \operatorname{Vol}_d(\operatorname{conv}(A))  + \operatorname{Vol}_d(\operatorname{conv}(B))+
$$
\begin{equation}\label{volume3}
 + \sum_{i=1}^{d} \sum_{\substack{A' \subset A, B' \subset B \\ |A'| = i, |B'| = d+1-i}} \operatorname{Vol}_{i-1}(\operatorname{conv}(A')) \operatorname{Vol}_{d-i}(\operatorname{conv}(B')).    
\end{equation}
\end{prop}

\begin{proof}
By the Caratheodory's theorem, the convex hull of the set $A\cup B$ is covered by the convex hulls of all its $(d+1)$-element subsets. So it is enough to prove (\ref{volume3}) when $|A| + |B| = d+1$. If one of the two sets is empty then the first two terms in (\ref{volume3}) upper bound the left hand side.
In the remaining case we need to show that
    $$
    \operatorname{Vol}_d(\operatorname{conv}(A \cup B)) \lesssim \operatorname{Vol}_{|A|-1}(\operatorname{conv}(A)) \operatorname{Vol}_{|B|-1}(\operatorname{conv}(B)).
    $$
This quickly follows from Proposition \ref{det} after projecting the sets on the sphere $S^d$ and noting that the volumes change only by a constant factor.
\end{proof}

We can now upper bound the volume of the convex hull of $S$ as follows:
$$
\operatorname{Vol}_d(\operatorname{conv}(S)) \lesssim \operatorname{Vol}_d(\operatorname{conv}(Y)) + \sum_{i = 1}^{a} \sum_{\substack{Y'\subset Y, Z' \subset Z,\\ |Y'|= d-i+1, |Z'| = i}} \operatorname{Vol}_{d-i}(\operatorname{conv}(Y')) \operatorname{Vol}_{i-1}(\operatorname{conv}(Z')) \lesssim 
$$
$$
\lesssim n^{-\gamma_0} + \sum_{i=1}^{a} n^{-\gamma_{i} - \frac{i-1}{d}} \lesssim n^{-\gamma_a - \frac{a-1}{d}}\lesssim n^{- \frac{a}{a+1}\log\frac{t+3}{3} -1 + \frac{a}{a+1} - \frac{a-1}{d}} \lesssim n^{- (1-\frac{1}{a+1}) (\log \frac{d}{a} - 1.1)},
$$
(where we bounded $\log 3 \le 1.1$). 
Note that $|Z| = a \le d_0 -d_1-1 < d+1$, so we do not have the term $\operatorname{Vol}_d(\operatorname{conv}(Z))$ in the above expression.
Since $a = k-d-1$, this implies that $\Delta_{k, d}(n) \lesssim n^{- (1-\frac{1}{k-d-1}) (\log \frac{d}{k-d-1} - 1.1)}$ holds. This proves Theorem \ref{largek}.

Now we prove Proposition \ref{smalld}. Let $a=1, t=1$ and consider the sequence $d_0 = 7 > d_1 = 3$. Repeating the argument above with these parameters produces a $9$-element set $S \subset [0,1]^7$ whose convex hull has volume at most $n^{- \gamma_1} = n^{- \frac{7-3-1}{7} - \frac{3-1}{3}} = n^{-\frac{23}{21}}.$

\end{document}